\newtheorem{thm}{Theorem}[section]
\newtheorem{lem}[thm]{Lemma}
\newtheorem{prop}[thm]{Proposition}
\theoremstyle{definition}
\theoremstyle{remark}
\newtheorem{rem}[thm]{Remark}
\numberwithin{equation}{section}
\begin{document}
\title[Some new generalized inequalities for the ratio functions]{Generalized inequalities for ratio functions of trigonometric and hyperbolic functions}

\author{Marko Kosti\' c}
\address{Faculty of Technical Sciences,
University of Novi Sad,
Trg D. Obradovi\' ca 6, 21125 Novi Sad, Serbia}
\email{marco.s@verat.net}

\author{Yogesh J. Bagul}
\address{Department of Mathematics, K. K. M. College Manwath, Dist : Parbhani(M.S.) - 431505,
India}
\email{yjbagul@gmail.com}

\author{Christophe Chesneau}
\address{LMNO, University of Caen-Normandie, Caen, France}
\email{christophe.chesneau@unicaen.fr}

\begin{abstract}
The main aim of this note, which can be viewed as a certain addendum to the paper \cite{2019}, is to propose several generalized inequalities for the ratio functions of trigonometric and hyperbolic functions. We basically follow the approach obeyed in this paper.
\\[2mm] {\it AMS Mathematics Subject Classification $(2010)$}: 26D20, 26D07, 33B30.
\\[1mm] {\it Key words and phrases:} Ratio functions of trigonometric and hyperbolic functions, inequalities, infinite products.
\end{abstract}

\maketitle

\section{Introduction}\label{intro}

The reading of paper \cite{2019} by C. Chesneau and Y. J. Bagul
has strongly influenced us to write this note. In paper \cite{2019} the inequalities involving $ \cosh x/\cos x $ and $ \sinh x/\sin x $ were established by using refinement of Bernoulli inequality. We establish corresponding several generalized inequalities by using further refinement of Bernoulli type inequality. The refinement of Bernoulli type inequality can be of independent interest.

\section{Inequalities for the ratio functions of trigonometric and hyperbolic functions}\label{improve}
The following result presents a sharp upper bound for $\ln  [(1+uv)/(1-uv)]$ involving $\ln  [(1+v)/(1-v)]$ and polynomial terms in $u$ and $v$. 
\begin{lem}\label{izc}
Suppose $u,\ v\in (0,1)$ and $k_{0}\in \{-1,0\} \cup {\mathbb N}.$ 
Then we have
\begin{align*}
\ln \left(\frac{1+uv}{1-uv}\right) \leq 2\sum_{k=0}^{k_{0}}\frac{v^{2k+1}\bigl[ u^{2k+1}-u^{2k_{0}+3} \bigr]}{2k+1}+u^{2k_{0}+3}\ln \left(\frac{1+v}{1-v}\right).
\end{align*}
\end{lem}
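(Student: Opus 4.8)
The plan is to reduce everything to the classical Maclaurin expansion
\[
\ln\!\left(\frac{1+x}{1-x}\right)=2\sum_{k=0}^{\infty}\frac{x^{2k+1}}{2k+1},\qquad x\in(-1,1),
\]
applied twice: once with $x=uv\in(0,1)$ on the left-hand side, and once with $x=v\in(0,1)$ inside the last term on the right-hand side. Since $0<u<1$, the series $\sum_{k}(uv)^{2k+1}/(2k+1)$ and $\sum_{k}v^{2k+1}/(2k+1)$ converge absolutely, so all the rearrangements below are legitimate.

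First I would substitute these two series into the asserted inequality and expand the finite sum. The right-hand side becomes
\[
2\sum_{k=0}^{k_{0}}\frac{(uv)^{2k+1}}{2k+1}-2u^{2k_{0}+3}\sum_{k=0}^{k_{0}}\frac{v^{2k+1}}{2k+1}+2u^{2k_{0}+3}\sum_{k=0}^{\infty}\frac{v^{2k+1}}{2k+1},
\]
and combining the last two sums this equals $2\sum_{k=0}^{k_{0}}(uv)^{2k+1}/(2k+1)+2u^{2k_{0}+3}\sum_{k=k_{0}+1}^{\infty}v^{2k+1}/(2k+1)$. On the left-hand side I would split off the same partial sum, writing $\ln\frac{1+uv}{1-uv}=2\sum_{k=0}^{k_{0}}(uv)^{2k+1}/(2k+1)+2\sum_{k=k_{0}+1}^{\infty}(uv)^{2k+1}/(2k+1)$. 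Cancelling the common block $2\sum_{k=0}^{k_{0}}(uv)^{2k+1}/(2k+1)$ from both sides, the lemma becomes equivalent to the tail inequality
\[
\sum_{k=k_{0}+1}^{\infty}\frac{u^{2k+1}v^{2k+1}}{2k+1}\ \le\ u^{2k_{0}+3}\sum_{k=k_{0}+1}^{\infty}\frac{v^{2k+1}}{2k+1}.
\]

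This last inequality I would establish term by term: for every $k\ge k_{0}+1$ we have $2k+1\ge 2k_{0}+3$, and since $0<u<1$ the map $n\mapsto u^{n}$ is decreasing, so $u^{2k+1}\le u^{2k_{0}+3}$; multiplying by the positive factor $v^{2k+1}/(2k+1)$ and summing over $k\ge k_{0}+1$ gives the claim. The degenerate case $k_{0}=-1$ is covered by the very same computation: the finite sum is empty, $2k_{0}+3=1$, and the statement reduces to $\ln\frac{1+uv}{1-uv}\le u\ln\frac{1+v}{1-v}$, which follows from $u^{2k+1}\le u$ for all $k\ge 0$.

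I do not expect any genuine analytic obstacle. The only points requiring attention are purely bookkeeping: getting the index shift $k_{0}+1$ correct when passing from partial sums to tails, and checking that the formula still makes sense (and the argument still runs) for the boundary value $k_{0}=-1$. The mechanism behind the whole estimate is just the monotonicity in $n$ of $t\mapsto t^{n}$ on $(0,1)$, combined with the fact that the correction sum $2\sum_{k=0}^{k_{0}}v^{2k+1}(u^{2k+1}-u^{2k_{0}+3})/(2k+1)$ is precisely engineered to reproduce the first $k_{0}+1$ Taylor coefficients of $\ln\frac{1+uv}{1-uv}$ after the term $u^{2k_{0}+3}\ln\frac{1+v}{1-v}$ is subtracted.
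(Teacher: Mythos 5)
Your argument is correct and is essentially the same as the paper's proof: both expand $\ln[(1+x)/(1-x)]$ as its odd power series, split off the partial sum up to $k_{0}$, and bound the tail term by term using $u^{2k+1}\leq u^{2k_{0}+3}$ for $k\geq k_{0}+1$. The only cosmetic difference is that you cancel the common partial sum and verify the tail inequality directly, whereas the paper substitutes $\ln[(1+v)/(1-v)]$ back in at the end; these are the same computation read in opposite directions.
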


\begin{proof}
Owing to the power series expansions of  $\ln  [(1+uv)/(1-uv)]$ and the fact that, for each natural number $k\geq k_{0}+1$, we have
$u^{2k_{0}+3}\geq u^{2k+1},$ we get
\begin{align*}
\ln \left(\frac{1+uv}{1-uv}\right)&=2\sum_{k=0}^{k_{0}}\frac{(uv)^{2k+1}}{2k+1}+2\sum_{k=k_{0}+1}^{+\infty}\frac{(uv)^{2k+1}}{2k+1}
\\& \leq 2\sum_{k=0}^{k_{0}}\frac{(uv)^{2k+1}}{2k+1}+2u^{2k_{0}+3}\sum_{k=k_{0}+1}^{+\infty}\frac{v^{2k+1}}{2k+1}
\\& =2\sum_{k=0}^{k_{0}}\frac{(uv)^{2k+1}}{2k+1}+u^{2k_{0}+3}\Biggl[ \ln \left(\frac{1+v}{1-v}\right) -\sum_{k=0}^{k_{0}}\frac{v^{2k+1}}{2k+1}\Biggr]
\\& =2\sum_{k=0}^{k_{0}}\frac{v^{2k+1}\bigl[ u^{2k+1}-u^{2k_{0}+3} \bigr]}{2k+1}+u^{2k_{0}+3}\ln \left(\frac{1+v}{1-v}\right).
\end{align*}
The proof of Lemma \ref{izc} is completed. 
\end{proof}
\begin{rem}
The inequality in Lemma \ref{izc} is equivalent to 
\begin{align*}
\frac{1+uv}{1-uv} \leq \exp\left\lbrace 2\sum_{k=0}^{k_{0}}\frac{v^{2k+1}\bigl[ u^{2k+1}-u^{2k_{0}+3} \bigr]}{2k+1}\right\rbrace \left(\frac{1+v}{1-v}\right)^{u^{2k_{0}+3}}.
\end{align*}
\end{rem}

It is worth noting that Lemma \ref{izc} further refine the Bernoulli type inequality established in \cite[Proposition 2]{2019}.

More to the point, we have the following:

\begin{prop}\label{profa}
Suppose $u,\ v\in (0,1).$ For any $k\in \{-1,0\} \cup {\mathbb N}$, let 
\begin{align}\label{ak}
a_{k}:=2\sum_{j=0}^{k}\frac{v^{2j+1}\bigl[ u^{2j+1}-u^{2k+3} \bigr]}{2j+1}+u^{2k+3}\ln \left(\frac{1+v}{1-v}\right),
\end{align}
such as, by Lemma \ref{izc}, we have $\ln  [(1+uv)/(1-uv)] \leq a_{k_0}$. Then, the sequence $(a_{k})_{k\in \{-1,0\} \cup {\mathbb N}}$ is strictly monotonically decreasing.
\end{prop}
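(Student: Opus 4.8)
The plan is to first eliminate the logarithm from the defining relation \eqref{ak} by expanding it as a power series, exactly as in the proof of Lemma \ref{izc}. Using $\ln[(1+v)/(1-v)] = 2\sum_{j=0}^{\infty} v^{2j+1}/(2j+1)$ and collecting terms, one obtains the equivalent closed form
\[
a_{k} = 2\sum_{j=0}^{k}\frac{(uv)^{2j+1}}{2j+1} + 2u^{2k+3}\sum_{j=k+1}^{\infty}\frac{v^{2j+1}}{2j+1},
\]
valid for every $k\in\{-1,0\}\cup\mathbb{N}$ (with the convention that the first sum is empty when $k=-1$). This is precisely the upper bound appearing in the chain of inequalities in the proof of Lemma \ref{izc}, so establishing this identity is just a matter of rearranging finite and convergent sums.

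With this representation in hand, I would compute the difference $a_{k}-a_{k+1}$ directly. The finite parts differ by the single extra term of $a_{k+1}$, contributing $-2(uv)^{2k+3}/(2k+3)$; in the tails, splitting off the $j=k+1$ summand from the series in $a_{k}$ gives
\[
2u^{2k+3}\sum_{j=k+1}^{\infty}\frac{v^{2j+1}}{2j+1} - 2u^{2k+5}\sum_{j=k+2}^{\infty}\frac{v^{2j+1}}{2j+1} = 2\frac{(uv)^{2k+3}}{2k+3} + 2u^{2k+3}\bigl(1-u^{2}\bigr)\sum_{j=k+2}^{\infty}\frac{v^{2j+1}}{2j+1}.
\]
The two terms $\pm\,2(uv)^{2k+3}/(2k+3)$ cancel, leaving the clean formula
\[
a_{k}-a_{k+1} = 2u^{2k+3}\bigl(1-u^{2}\bigr)\sum_{j=k+2}^{\infty}\frac{v^{2j+1}}{2j+1}.
\]

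The conclusion is then immediate: since $u,v\in(0,1)$ we have $u^{2k+3}>0$, $1-u^{2}>0$, and every term of the series is positive, so $a_{k}-a_{k+1}>0$, which is the desired strict monotonicity. I do not expect any genuine obstacle here: the only points needing care are getting the index shift in the tail sums right and checking the boundary case $k=-1$ (where $a_{-1}=2u\sum_{j=0}^{\infty} v^{2j+1}/(2j+1)$ and the formula still applies). Once the logarithm has been absorbed into the series, the argument is a short telescoping computation.
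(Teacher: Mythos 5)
Your proof is correct and follows essentially the same route as the paper: both compute $a_{k}-a_{k+1}$ and reduce it, via the power series of $\ln[(1+v)/(1-v)]$, to the same expression $2u^{2k+3}(1-u^{2})\sum_{j=k+2}^{\infty}v^{2j+1}/(2j+1)>0$. The only cosmetic difference is that you absorb the logarithm into a series representation of $a_{k}$ before differencing, whereas the paper differences first and substitutes the series at the end.
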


\begin{proof}
Let $k\in \{-1,0\} \cup {\mathbb N}.$ We have 
\begin{align*}
& a_k-a_{k+1}\\
& = 2\frac{(uv)^{2k+3}}{2k+3}\bigl( u^{2}-1 \bigr)+2\bigl( u^{2}-1 \bigr)\sum_{j=0}^{k}\frac{v^{2j+1}u^{2k+3}}{2j+1}-u^{2k+3}\bigl( u^{2}-1 \bigr) \ln \left(\frac{1+v}{1-v}\right)\\
& = u^{2k+3}(u^2-1)\left[  2\frac{v^{2k+3}}{2k+3}+2\sum_{j=0}^{k}\frac{v^{2j+1}}{2j+1}-\ln \left(\frac{1+v}{1-v}\right) \right]\\
& = 2u^{2k+3}(1-u^2)\sum_{j=k+2}^{+\infty}\frac{v^{2j+1}}{2j+1}>0.
\end{align*}
Hence, $a_{k}> a_{k+1}$, implying the desired result. This ends the proof of Proposition \ref{profa}.
\end{proof}

Now we will prove the following extension of \cite[Proposition 2]{2019}, where the case $k_{0}=-1$ has been considered.

\begin{prop}\label{sorry}
For each number $k\in {\mathbb N}_{0},$ set 
$$
I_{k}:=\sum_{n=1}^{+\infty}\frac{1}{(2n-1)^{4k+2}}.
$$
Suppose $\alpha \in (0,\pi/2),$ $x\in (0,\alpha)$ and $k_{0}\in \{-1,0\} \cup {\mathbb N}.$  
Then, we have
\begin{align*}
\frac{\cosh x}{\cos x}
\leq  \Biggl(\frac{\cosh \alpha}{\cos \alpha} \Biggr)^{(x/\alpha)^{4k_{0}+6}}
 \exp\left\lbrace 2\sum_{k=0}^{k_{0}}\frac{(4\alpha^{2}/\pi^{2})^{2k+1}\bigl[ (x/\alpha)^{4k+2}-(x/\alpha)^{4k_{0}+6} \bigr]}{2k+1}I_{k}\right\rbrace.
\end{align*}
Denote by $b_{k_{0}}$ the right hand side of this inequality. Then, the sequence $(b_{k})_{k\in \{-1,0\} \cup {\mathbb N}}$ is monotonically decreasing.
\end{prop}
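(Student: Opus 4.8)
The strategy is to recognize that the function $\cosh x/\cos x$ has a known infinite-product representation, take logarithms, and then apply Lemma \ref{izc} termwise. Specifically, recall the classical products $\cos x = \prod_{n=1}^\infty \left(1 - \frac{4x^2}{(2n-1)^2\pi^2}\right)$ and $\cosh x = \prod_{n=1}^\infty \left(1 + \frac{4x^2}{(2n-1)^2\pi^2}\right)$. Dividing, taking logs, and writing $u = (x/\alpha)^2 \in (0,1)$ and $v_n = \frac{4\alpha^2}{(2n-1)^2\pi^2} \in (0,1)$ (which lies in $(0,1)$ precisely because $\alpha < \pi/2$), we obtain
\begin{align*}
\ln\left(\frac{\cosh x}{\cos x}\right) = \sum_{n=1}^\infty \ln\left(\frac{1 + u v_n}{1 - u v_n}\right).
\end{align*}
Now apply Lemma \ref{izc} to each summand with the same $u$ but with $v$ replaced by $v_n$, and with the same index $k_0$. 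This bounds each term by $2\sum_{k=0}^{k_0} \frac{v_n^{2k+1}[u^{2k+1} - u^{2k_0+3}]}{2k+1} + u^{2k_0+3}\ln\left(\frac{1+v_n}{1-v_n}\right)$.

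Next I would sum over $n$. The key computation is $\sum_{n=1}^\infty v_n^{2k+1} = (4\alpha^2/\pi^2)^{2k+1}\sum_{n=1}^\infty \frac{1}{(2n-1)^{4k+2}} = (4\alpha^2/\pi^2)^{2k+1} I_k$, which converges since $4k+2 \geq 2$. For the final term, $\sum_{n=1}^\infty \ln\left(\frac{1+v_n}{1-v_n}\right) = \ln\left(\frac{\cosh\alpha}{\cos\alpha}\right)$ by the same product identity evaluated at $x = \alpha$ (so $u = 1$). Substituting $u^{2k+1} = (x/\alpha)^{4k+2}$, $u^{2k_0+3} = (x/\alpha)^{4k_0+6}$, and exponentiating yields exactly the stated inequality with right-hand side $b_{k_0}$. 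One should briefly justify interchanging summation over $n$ with the finite sum over $k$ and with the logarithm/exponential — routine since all terms are nonnegative and the series converge absolutely.

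For the monotonicity claim, the cleanest route is to mimic Proposition \ref{profa}: since $b_{k_0}$ is obtained by applying the bound of Lemma \ref{izc} termwise and summing, and since Proposition \ref{profa} shows that for each fixed $v = v_n$ the bounds $a_k$ form a strictly decreasing sequence, summing the inequalities $a_k(v_n) > a_{k+1}(v_n)$ over $n$ (all the relevant $a_k$-differences being positive, as shown in the proof of Proposition \ref{profa}) gives that $\ln b_k > \ln b_{k+1}$, hence $(b_k)$ is decreasing. Concretely, $\ln b_k - \ln b_{k+1} = \sum_{n=1}^\infty \left[a_k(v_n) - a_{k+1}(v_n)\right] = \sum_{n=1}^\infty 2 u^{2k+3}(1-u^2)\sum_{j=k+2}^\infty \frac{v_n^{2j+1}}{2j+1} > 0$, where I reuse the closed form from the proof of Proposition \ref{profa}. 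The main obstacle — and it is a mild one — is making sure the $v_n$ genuinely lie in $(0,1)$ so that Lemma \ref{izc} and the power-series manipulations apply; this is exactly where the hypothesis $\alpha \in (0,\pi/2)$ enters, since then $v_1 = 4\alpha^2/\pi^2 < 1$ and $v_n < v_1$ for all $n \geq 2$. Everything else is bookkeeping with convergent nonnegative series.
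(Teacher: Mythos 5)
Your proposal is correct and follows essentially the same route as the paper: the infinite-product representation of $\cosh x/\cos x$, a termwise application of Lemma \ref{izc} with $u=(x/\alpha)^{2}$ and $v_{n}=4\alpha^{2}/[\pi^{2}(2n-1)^{2}]$, summation over $n$ to produce $I_{k}$ and the factor $(\cosh\alpha/\cos\alpha)^{(x/\alpha)^{4k_{0}+6}}$, and the identification $\ln b_{k}=\sum_{n}a_{k,n}$ combined with Proposition \ref{profa} for the monotonicity. Your explicit check that $v_{n}\in(0,1)$ (via $\alpha<\pi/2$) and the remark on interchanging the sums are details the paper leaves implicit, but the argument is the same.
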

\begin{proof}
The proof follows the arguments of those of the proof of \cite[Proposition 2]{2019}. We consider the following product expansion:
$$
\frac{\cosh x}{\cos x}=\prod_{n=1}^{+\infty}\frac{1+ 4 x^{2}/[\pi^{2}(2n-1)^{2}]}{1-4x^{2}/[\pi^{2}(2n-1)^{2}]}.
$$
Applying Lemma \ref{izc} with $u=x^{2}/\alpha^{2}$ and $v=4\alpha^{2}/[\pi^{2}(2n-1)^{2}]$, we get that
\begin{align*}
& \frac{\cosh x}{\cos x} = \prod_{n=1}^{+\infty}\frac{1+(x^{2}/\alpha^{2}) \{4\alpha^{2}/[\pi^{2}(2n-1)^{2}]\}}{1-(x^{2}/\alpha^{2}) \{4\alpha^{2}/[\pi^{2}(2n-1)^{2}]\}}
\\& \leq \prod_{n=1}^{+\infty} \Biggl( \frac{1+4\alpha^{2}/[\pi^{2}(2n-1)^{2}]}{1-4\alpha^{2}/[\pi^{2}(2n-1)^{2}]} \Biggr)^{(x/\alpha)^{4k_{0}+6}}\times \\
&  \exp\left\lbrace 2\sum_{k=0}^{k_{0}}\frac{\{4\alpha^{2}/[\pi^{2}(2n-1)^{2}]\}^{2k+1}\bigl[ (x/\alpha)^{4k+2}-(x/\alpha)^{4k_{0}+6} \bigr]}{2k+1}\right\rbrace
\\& =\Biggl(\frac{\cosh \alpha}{\cos \alpha} \Biggr)^{(x/\alpha)^{4k_{0}+6}}\times \\
&  \exp\left\lbrace 2\sum_{k=0}^{k_{0}}\sum_{n=1}^{+\infty}\frac{1}{(2n-1)^{4k+2}}\frac{(4\alpha^{2}/\pi^{2})^{2k+1}\bigl[ (x/\alpha)^{4k+2}-(x/\alpha)^{4k_{0}+6} \bigr]}{2k+1}\right\rbrace
\\ & =\Biggl(\frac{\cosh \alpha}{\cos \alpha} \Biggr)^{(x/\alpha)^{4k_{0}+6}} \exp\left\lbrace 2\sum_{k=0}^{k_{0}}\frac{(4\alpha^{2}/\pi^{2})^{2k+1}\bigl[ (x/\alpha)^{4k+2}-(x/\alpha)^{4k_{0}+6} \bigr]}{2k+1}I_{k}\right\rbrace.
\end{align*}
Now, note that we can write  $b_k=\prod_{n=1}^{+\infty}\exp(a_{k,n})$, where $a_{k,n}$ is defined by \eqref{ak} with $u=x^{2}/\alpha^{2}$ and $v=4\alpha^{2}/[\pi^{2}(2n-1)^{2}]$. Since $(a_{k,n})_{k\in \{-1,0\} \cup {\mathbb N}}$ is monotonically decreasing by Proposition \ref{profa}, the same holds for 
$(b_{k})_{k\in \{-1,0\} \cup {\mathbb N}}$. 
This ends the proof of Proposition \ref{sorry}. 
\end{proof}

The following proposition is a consequence of Proposition \ref{sorry}. 

\begin{prop}\label{cons}
The upper bound in Proposition \ref{sorry} implies \cite[Proposition 3]{2019}, i.e., for $x\in (0,\pi/2)$, 
\begin{align*}
\frac{\cosh x}{\cos x}
\leq   \left( \frac{\pi^2+4x^2}{\pi^2-4x^2}\right)^{\pi^2/8}.
\end{align*}
\end{prop}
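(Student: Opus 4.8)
The plan is to derive Proposition~\ref{cons} by specializing Proposition~\ref{sorry} to the simplest case $k_0 = -1$ and then taking the limit $\alpha \to \pi/2$. First I would observe that when $k_0 = -1$, the empty sum $\sum_{k=0}^{-1}$ vanishes, so the exponential factor in Proposition~\ref{sorry} equals $1$ and the exponent on $(\cosh\alpha/\cos\alpha)$ becomes $(x/\alpha)^{4(-1)+6} = (x/\alpha)^2$. Thus for every $\alpha \in (0,\pi/2)$ and $x \in (0,\alpha)$ we get the clean intermediate bound
\begin{align*}
\frac{\cosh x}{\cos x} \leq \left(\frac{\cosh\alpha}{\cos\alpha}\right)^{(x/\alpha)^2}.
\end{align*}

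Next, I would let $\alpha \uparrow \pi/2$. Fix $x \in (0,\pi/2)$; then $x < \alpha$ eventually, so the bound applies. The exponent $(x/\alpha)^2 \to (2x/\pi)^2 = 4x^2/\pi^2$, which is continuous in $\alpha$, so the only delicate point is the behaviour of $(\cosh\alpha/\cos\alpha)^{(x/\alpha)^2}$ as $\cos\alpha \to 0^+$. Here $\cosh\alpha \to \cosh(\pi/2)$ is bounded and positive, so $\cosh\alpha/\cos\alpha \to +\infty$; I would therefore need to be slightly careful and instead compare directly with the target expression. The cleanest route is to note that for $x \in (0,\pi/2)$ one has the product expansion
\begin{align*}
\frac{\cosh x}{\cos x} = \prod_{n=1}^{+\infty}\frac{1 + 4x^2/[\pi^2(2n-1)^2]}{1 - 4x^2/[\pi^2(2n-1)^2]},
\end{align*}
and the $n=1$ factor is exactly $(\pi^2+4x^2)/(\pi^2-4x^2)$. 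So the claim $\cosh x/\cos x \le \bigl((\pi^2+4x^2)/(\pi^2-4x^2)\bigr)^{\pi^2/8}$ is really a statement comparing the full product to a power of its first factor.

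Accordingly, the argument I would actually run is: apply Lemma~\ref{izc} with $k_0 = -1$, $u = 4x^2/\pi^2$, and $v = 1/(2n-1)^2$ to each factor, giving
\begin{align*}
\ln\frac{1 + (4x^2/\pi^2)/(2n-1)^2}{1 - (4x^2/\pi^2)/(2n-1)^2} \leq \frac{4x^2}{\pi^2}\,\ln\frac{1 + 1/(2n-1)^2}{1 - 1/(2n-1)^2} = \frac{4x^2}{\pi^2}\,\ln\frac{(2n-1)^2+1}{(2n-1)^2-1},
\end{align*}
valid for $n \ge 2$ (for $n=1$ the right side degenerates, so I keep that factor as is). Summing over $n \ge 2$ and exponentiating, $\cosh x/\cos x$ is bounded by $\bigl((\pi^2+4x^2)/(\pi^2-4x^2)\bigr)$ times $\prod_{n=2}^\infty \bigl(\frac{(2n-1)^2+1}{(2n-1)^2-1}\bigr)^{4x^2/\pi^2}$; since $4x^2/\pi^2 \in (0,1)$ and each factor exceeds $1$, this is at most $\bigl((\pi^2+4x^2)/(\pi^2-4x^2)\bigr) \cdot \bigl(\prod_{n=2}^\infty \frac{(2n-1)^2+1}{(2n-1)^2-1}\bigr)^{4x^2/\pi^2}$. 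Finally I would evaluate the residual product: from $\cosh(\pi/2)/\cos(\pi/2)$ — or rather from the telescoping-type identity $\prod_{n=1}^\infty \frac{(2n-1)^2+1}{(2n-1)^2-1}$ — one identifies $\prod_{n=2}^\infty \frac{(2n-1)^2+1}{(2n-1)^2-1}$ in closed form, and the exponents combine so that the total power of $(\pi^2+4x^2)/(\pi^2-4x^2)$ comes out to $\pi^2/8$. The main obstacle is precisely this bookkeeping with the degenerate $n=1$ term and correctly evaluating the infinite product of the remaining factors so that the exponent collapses to $\pi^2/8$; everything else is a direct specialization of the already-proven Lemma~\ref{izc} and Proposition~\ref{sorry}.
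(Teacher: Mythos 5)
There is a genuine gap, and it sits exactly at the step you flag as ``bookkeeping'': the exponents cannot combine to give $\pi^{2}/8$ along the route you propose. After applying Lemma \ref{izc} with $k_{0}=-1$, $u=4x^{2}/\pi^{2}$, $v=1/(2n-1)^{2}$ to the factors with $n\ge 2$ and keeping the $n=1$ factor as is, what you actually obtain is
\begin{align*}
\frac{\cosh x}{\cos x}\leq \frac{\pi^{2}+4x^{2}}{\pi^{2}-4x^{2}}\cdot C^{4x^{2}/\pi^{2}},
\qquad C:=\prod_{n=2}^{+\infty}\frac{(2n-1)^{2}+1}{(2n-1)^{2}-1}=\frac{2\cosh (\pi/2)}{\pi},
\end{align*}
and the residual factor $C^{4x^{2}/\pi^{2}}$ is a pure exponential in $x^{2}$, not a power of $(\pi^{2}+4x^{2})/(\pi^{2}-4x^{2})$, so no exponent ``collapses'' to $\pi^{2}/8$. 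Worse, this bound does not imply the target: near $x=0$ your bound gives $\ln (\cosh x/\cos x)\le \frac{4}{\pi^{2}}\bigl(2+\ln \frac{2\cosh(\pi/2)}{\pi}\bigr)x^{2}+O(x^{4})\approx 1.0004\,x^{2}$, whereas the claimed bound gives exactly $x^{2}+O(x^{4})$; so your upper bound sits strictly above the one to be deduced. The constant $\pi^{2}/8$ is $\sum_{n\ge 1}(2n-1)^{-2}$, i.e., it must arise from summing a contribution proportional to $(2n-1)^{-2}$ over \emph{all} $n$ including $n=1$ --- precisely what is destroyed by splitting off the $n=1$ factor with exponent $1$.

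The paper's proof goes in the opposite direction: it keeps $\alpha\in(0,\pi/2)$ fixed and $k_{0}$ general, bounds $I_{k}\le I_{0}=\pi^{2}/8$ for every $k$ (this is where $\pi^{2}/8$ enters), and notes $(x/\alpha)^{4k+2}-(x/\alpha)^{4k_{0}+6}>0$, obtaining
\begin{align*}
\frac{\cosh x}{\cos x}\le \Bigl(\frac{\cosh\alpha}{\cos\alpha}\Bigr)^{(x/\alpha)^{4k_{0}+6}}\exp\Bigl\{\tfrac{\pi^{2}}{4}\bigl[S_{k_{0}}\bigl(4x^{2}/\pi^{2}\bigr)-(x/\alpha)^{4k_{0}+6}S_{k_{0}}\bigl(4\alpha^{2}/\pi^{2}\bigr)\bigr]\Bigr\}
\end{align*}
with $S_{k_{0}}(a)=\sum_{k=0}^{k_{0}}a^{2k+1}/(2k+1)$, and then lets $k_{0}\to+\infty$ with $\alpha$ fixed: the prefactor tends to $1$ since $(x/\alpha)^{4k_{0}+6}\to 0$, and $S_{k_{0}}(4x^{2}/\pi^{2})\to\frac{1}{2}\ln [(\pi^{2}+4x^{2})/(\pi^{2}-4x^{2})]$. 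Your first instinct ($k_{0}=-1$, then $\alpha\to\pi/2$) fails for the reason you yourself identified; the missing idea is to send $k_{0}\to+\infty$ rather than $\alpha\to\pi/2$.
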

\begin{proof}
For any $a\in (0,1)$ and  $k_{0}\in \{-1,0\} \cup {\mathbb N},$ set
$$S_{k_0}(a)=\sum_{k=0}^{k_0} \frac{a^{2k+1}}{2k+1}.$$
Now, for any $k\in {\mathbb N}_{0},$ we have
$$I_k \le \sum_{n=1}^{+\infty}\frac{1}{(2n-1)^{2}}=\frac{\pi^2}{8}$$
and $(x/\alpha)^{4k+2}-(x/\alpha)^{4k_{0}+6} >0$ for $k=0,\ldots,k_0$. It follows from Proposition \ref{sorry} that
\begin{align*}
\frac{\cosh x}{\cos x} \le \Biggl(\frac{\cosh \alpha}{\cos \alpha} \Biggr)^{(x/\alpha)^{4k_{0}+6}} \exp\left\lbrace \frac{\pi^2}{4}\left[ S_{k_0}\left(\frac{4x^2}{\pi^2}\right)-\left(\frac{x}{\alpha}\right)^{4k_{0}+6} S_{k_0}\left(\frac{4\alpha^2}{\pi^2}\right)\right]\right\rbrace. 
\end{align*}
We end the proof of Proposition \ref{cons} by applying $k_0\rightarrow +\infty$. Indeed, we have $\lim_{k_0\rightarrow +\infty} (x/\alpha)^{4k_{0}+6}=0$ and, by using the power series expansions of  $\ln  [(1+u)/(1-u)]$, we get
\begin{align*}
& \lim_{k_0\rightarrow +\infty}S_{k_0}\left(\frac{4x^2}{\pi^2}\right)-\left(\frac{x}{\alpha}\right)^{4k_{0}+6} S_{k_0}\left(\frac{4\alpha^2}{\pi^2}\right)=\lim_{k_0\rightarrow +\infty}S_{k_0}\left(\frac{4x^2}{\pi^2}\right)\\
& =\frac{1}{2}\ln \left(\frac{1+4x^2/\pi^2}{1-4x^2/\pi^2}\right)=\frac{1}{2}\ln \left(\frac{\pi^2+4x^2}{\pi^2-4x^2}\right).
\end{align*}
The desired upper bound follows. 
\end{proof}

\begin{rem}
The term $I_k$ can be bounded sharply by well-known results on the  Riemann zeta function defined by $$\zeta(s)=\sum_{n=1}^{+\infty}\frac{1}{n^s}.$$
Indeed, after some algebraic manipulations, we get
\begin{align*}
 I_k = (1- 2^{-(4k+2)})\zeta(4k+2). 
\end{align*}
Thus, well-known upper bound for $\zeta(s)$ gives upper bound for $I_k$. For instance, it follows from \cite{batir} that 
$$\zeta(s)\le (1-2^{1-s})^{-1}.$$
Hence,
$$I_k\le  (1- 2^{-(4k+2)})  (1- 2^{-(4k+1)})^{-1}.$$
However, the benefit of such sharp inequality in our context need further developments that we leave for a future work. 
\end{rem}



\begin{rem}\label{thebest} 
If $\alpha \in (0,\pi/2),$ then the constant $\beta= \ln (\cosh \alpha /\cos \alpha)/\alpha^2$ is the best possible constant for which we have
$$
\frac{\cosh x}{\cos x}\leq e^{\beta x^{2}},\quad x\in (0,\alpha).
$$
In actual fact, the following estimate has been deduced in the proof of \cite[Proposition 2]{2019}:
$$
\frac{\cosh x}{\cos x}\leq \Biggl(\frac{\cosh \alpha}{\cos \alpha} \Biggr)^{(x/\alpha)^{2}}.
$$
This means that the function $f(x)=(\cosh x/\cos x)^{1/x^{2}},$ $x\in (0,\pi/2)$ is monotonically increasing. So, if $
\cosh x/\cos x \leq e^{\gamma x^{2}},$ $x\in (0,\alpha)$ for some real number $\gamma,$ then we must have
$ (\cosh x/\cos x)^{1/x^{2}}\leq e^{\gamma}.
$ Letting $x\rightarrow \alpha-,$ we get that $\gamma \geq \beta,$ as claimed.
\end{rem}

One can on the similar line prove the following extension of \cite[Proposition 4]{2019} where again the case $ k_{0} = -1 $ has been considered. \cite[Proposition 5]{2019} can also be obtained easily from the following proposition.

\begin{prop}\label{Yog}

Suppose $\alpha \in (0,\pi/2),$ $x\in (0,\alpha)$ and $k_{0}\in \{-1,0\} \cup {\mathbb N}.$  
Then, we have
\begin{align*}
\frac{\sinh x}{\sin x}
& \leq  \Biggl(\frac{\sinh \alpha}{\sin \alpha} \Biggr)^{(x/\alpha)^{4k_{0}+6}}
\times \\
&  \exp\left\lbrace 2\sum_{k=0}^{k_{0}}\frac{(\alpha^{2}/\pi^{2})^{2k+1}\bigl[ (x/\alpha)^{4k+2}-(x/\alpha)^{4k_{0}+6} \bigr]}{2k+1} \zeta(4k+2)\right\rbrace.
\end{align*}
\end{prop}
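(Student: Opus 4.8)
The plan is to mimic the proof of Proposition~\ref{sorry}, replacing the cosine/cosine-hyperbolic product with the analogous expansion for $\sinh x/\sin x$. Recall that
$$
\frac{\sinh x}{\sin x}=\prod_{n=1}^{+\infty}\frac{1+ x^{2}/(\pi^{2}n^{2})}{1-x^{2}/(\pi^{2}n^{2})},
$$
which comes from the Weierstrass product formulas $\sin x = x\prod_{n\ge 1}(1-x^2/(\pi^2 n^2))$ and $\sinh x = x\prod_{n\ge 1}(1+x^2/(\pi^2 n^2))$. This is the product used in \cite[Proposition 4]{2019}, so I may quote it.

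The key step is to apply Lemma~\ref{izc} termwise with $u=x^{2}/\alpha^{2}\in(0,1)$ (valid since $0<x<\alpha$) and $v=\alpha^{2}/(\pi^{2}n^{2})\in(0,1)$ (valid since $\alpha<\pi/2<\pi n$), noting that $uv = x^2/(\pi^2 n^2)$ and that with these choices $1+v)/(1-v) = (\pi^2 n^2+\alpha^2)/(\pi^2 n^2 - \alpha^2)$ is exactly the $n$th factor of $\sinh\alpha/\sin\alpha$. Thus for each $n$,
$$
\frac{1+x^{2}/(\pi^{2}n^{2})}{1-x^{2}/(\pi^{2}n^{2})}\leq\Biggl(\frac{1+\alpha^{2}/(\pi^{2}n^{2})}{1-\alpha^{2}/(\pi^{2}n^{2})}\Biggr)^{(x/\alpha)^{4k_{0}+6}}\exp\left\lbrace 2\sum_{k=0}^{k_{0}}\frac{\{\alpha^{2}/(\pi^{2}n^{2})\}^{2k+1}\bigl[(x/\alpha)^{4k+2}-(x/\alpha)^{4k_{0}+6}\bigr]}{2k+1}\right\rbrace,
$$
where I used $u^{2j+1}=(x/\alpha)^{4j+2}$ and $u^{2k_0+3}=(x/\alpha)^{4k_0+6}$. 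Taking the product over $n\geq 1$, the first factor collapses to $(\sinh\alpha/\sin\alpha)^{(x/\alpha)^{4k_0+6}}$, and the exponentials add so that the sum over $n$ of $\{\alpha^{2}/(\pi^{2}n^{2})\}^{2k+1}=(\alpha^2/\pi^2)^{2k+1}\sum_{n\ge1}n^{-(4k+2)}=(\alpha^2/\pi^2)^{2k+1}\zeta(4k+2)$ appears, after interchanging the (finite) sum over $k$ with the sum over $n$. This yields exactly the claimed bound.

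The main technical point to be careful about is the legitimacy of multiplying the infinite family of inequalities and of interchanging the sum over $n$ with the finite sum over $k$: convergence of $\prod_n (1+x^2/(\pi^2 n^2))/(1-x^2/(\pi^2 n^2))$ is classical, and convergence of $\sum_n n^{-(4k+2)}=\zeta(4k+2)$ for $k\ge0$ is immediate, so all the manipulations are absolutely convergent and the rearrangement is valid; this is the only potential obstacle and it is routine. (If one also wants the monotonicity statement as in Proposition~\ref{sorry}, it follows in the same way: writing the right-hand side as $\prod_n \exp(a_{k,n})$ with $a_{k,n}$ given by \eqref{ak} for $u=x^2/\alpha^2$, $v=\alpha^2/(\pi^2 n^2)$, Proposition~\ref{profa} gives that each $(a_{k,n})_k$ is decreasing, hence so is the product.) This completes the plan.
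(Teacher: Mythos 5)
Your proposal is correct and follows exactly the route the paper intends: it carries out the argument of Proposition \ref{sorry} verbatim with the Weierstrass product for $\sinh x/\sin x$, applying Lemma \ref{izc} with $u=x^{2}/\alpha^{2}$ and $v=\alpha^{2}/(\pi^{2}n^{2})$ so that the sum over $n$ produces $\zeta(4k+2)$ in place of $I_{k}$. The paper itself omits the details, stating only that the proof proceeds ``on the similar line,'' and your write-up supplies precisely those details (including the harmless convergence checks), so there is nothing to correct.
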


\end{document}